\documentclass[dvipdfm]{amsart}

\usepackage{amsmath, amsthm}
\usepackage{amsmath, amsfonts}
\usepackage{amsmath, amssymb}
\usepackage{amsmath}
\usepackage[all]{xy}

\textheight = 22 cm \textwidth = 16 cm \oddsidemargin = 1 cm
\evensidemargin =1cm \topmargin = 0.0 cm
\setlength{\hoffset}{0 cm}

\newtheorem{thm}{Theorem}[section]
\newtheorem{cor}[thm]{Corollary}
\newtheorem{lem}[thm]{Lemma}

\newtheorem{rmk}[thm]{Remark}

\numberwithin{equation}{section}

\newcommand{\spec}{\mbox{Spec}}

\newcommand{\EE}{\mathbb{E}}
\newcommand{\XX}{\mathfrak{X}}
\newcommand{\MM}{\mathcal{M}}
\newcommand{\GG}{\mathbb{G}}

\newcommand{\Hom}{\mathcal{H}om}

\newcommand{\RHom}{R\mathcal{H}om}

\newcommand{\Lotimes}{\stackrel{L}{\otimes}}

\newcommand{\bl}{\textbf}

\newcommand{\Ext}{\mbox{Ext}}

 \makeatletter
 
 \newcommand{\Rmnum}[1]{\expandafter\@slowromancap\romannumeral #1@}
 \makeatother
\begin{document}

  \title{On the deformation theory of pair $(X, E)$}
  \author {Si\ \ \  Li}
  \address{Mathematics Department, Harvard University}
  \email{sili@math.harvard.edu}
\date{}
  \maketitle


\begin{abstract}Huybrechts and Thomas recently constructed
relative obstruction theory of objects of the derived category of
coherent sheaves over smooth projective family. In this paper, we
use this construction to obtain the absolute deformation-obstruction
theory of the pair $(X, E)$, with X smooth projective scheme and $E$
perfect complex, and show that the obstruction theories for $E, (X,
E),$ and $X$ fit into exact triangle as derived objects on the
moduli space.
\end{abstract}

\section{Introduction}

\indent\indent The deformation theory of objects of the derived
category of coherent sheaves on smooth projective variety has been
studies in \cite{Lieblich, Lowen} and developed recently in
\cite{Hurbrechts-Thomas}. The latter uses Illusie's cotangent
complex and Atiyah class \cite{Ill} to show that the obstruction
class is the product of Atiyah and Kodaira-Spencer classes, and
describe the relative obstruction theory (in the sense of
\cite{intrinsic}) for moduli space of perfect simple complexes on
smooth projective families of threefold, which is used to obtain
virtual cycle generalizing the virtual counting in
\cite{Stable-Pair} and \cite{Thomas}.\\

 In this paper, we show how the setting of relative obstruction
theory in \cite{Hurbrechts-Thomas} can be used to obtain the
absolute obstruction theory of the pair $(X, E)$, with X smooth
projective scheme and $E$ perfect complex of coherent sheaves.
Specifically, given a perfect complex $E$ on X, Illusie's Atiyah
class gives an element
$$
    A(E)\in \Ext^1_X(E, E\otimes L^\bullet_X)
$$
where $L^\bullet_X$ is Illusie's cotangent complex, which is
quasi-isomorphic to the cotangent bundle $\Omega_X$ in our case when
$X$ is smooth. View it as a map in the derived category
$$
    A(E): \RHom(E, E)[-1] \to \Omega_X
$$
and let $G$ be the mapping cone. Then the tangent space of deforming
the pair is
$$
    \Ext^1_X(G, \mathcal O_X)
$$
and the obstruction space lies in
$$
    \Ext^2_X(G, \mathcal O_X)
$$
The exact triangle
$$
    \RHom(E, E)[-1] \to \Omega_X\to G\to \RHom(E, E)
$$
naturally puts the tangent-obstruction spaces of deforming the
complex $E$ fixing $X$, deforming the pair $(X, E)$ and deforming
the scheme $X$ into a long exact sequence.\\

 As an application, we specialize to the case that $E$ is a vector
bundle on X. The complex $G$ can be explicitly described in this
case, and we recover the fact that the tangent and obstructio space
for deforming the pair $(X, E)$ is obtained via the first and second
cohomology of the sheaf of differential operators of order $\leq 1$ with diagonal symbol (\ref{DE}).\\

\bl{Notation.} $k$ is fixed to be algebraic closed field of
characteristic zero. We use standard notations for derived functors,
for instance, $L\pi^*$ is the derived pull-back by $\pi$, $R\pi_*$
is the derived push-forward by $\pi$, $(\cdot)^v$ is the derived
dual,  and $\RHom$ is the derived
$\mathcal Hom$.\\

{\bf Acknowledgements.} The author thanks R.P.\ Thomas, D.\
Huybrechts for valuable
 comments, and his advisor S.T.\ Yau for constant support.

\section{Moduli of Pair and Relative obstruction theory}
In this section, we consider
\begin{eqnarray*}
        &\mathfrak{X}&\\ &\downarrow& \\ &S&
\end{eqnarray*}
a flat family of smooth projective varieties of dimension n which is
universal at every point of $S$. Denote by $i_s:
\XX_s\hookrightarrow \XX$ the fiber of $\XX$ over a closed point
$s\in S$ and similar notation for other families. Let $\mathcal M/S$
be a \emph{relative fine moduli space of perfect complexes} over
$\mathfrak X/S$ (see \cite{Hurbrechts-Thomas} for more detail).
There is a perfect complex (the universal complex for the moduli
space)
\begin{eqnarray}
    \EE\in D^b(\MM\times_S \XX)
\end{eqnarray}
such that $\MM/S$ represents the functor which associates any scheme
$T$ over $S$ the set of equivalence classes of perfect complexes $E$
over $T\times_S \XX$
\begin{eqnarray*}
            &E& \\ &\downarrow& \\ &T\times_S \XX&
\end{eqnarray*}
whose restriction to any fiber to $T\times_S \XX/T$ is isomorphic to
the restriction of $\EE$ to some fiber of $\MM\times_S \XX/\MM$.\\
\indent Consider the cartesian diagram
\begin{eqnarray*}
    \xymatrix{
    \mathcal M\times_S \mathfrak{X}\ar[d]_{\pi_{\mathcal M}}
    \ar[r]^{p_{\mathfrak X}}
    &  \mathfrak X\ar[d]^{\pi_S}\\
    \mathcal M\ar[r]^{p_S} & S
    }
\end{eqnarray*}
where we denote by $\pi_{\MM}, \pi_S, p_{\XX}, p_S$ the
corresponding morphisms as in the diagram. Let $m\in \MM$ be a
closed point of $\MM$, $s=p_S(m)$, $\EE_m=i_m^* \EE$ the restriction
of $\EE$ over $m$, then we get a pair
\begin{eqnarray*}
        (\XX_s, \EE_m)
\end{eqnarray*}
Since we assume that $S$ is universal at $s$, the moduli space $\MM$
at $m$ actually parameterizes the local deformation space of the
pair
$(\XX_s, \EE_m)$.\\
\indent A relative obstruction theory for $\MM/S$ is constructed via
Atiyah class in \cite{Hurbrechts-Thomas}. We review their
construction which will be generalized in the next section to the
absolute case. Let
$$
    A(\EE)\in \Ext^1_{\MM\times_S \XX} (\EE, \EE\Lotimes L^\bullet_{\MM\times_S \XX})
$$
be Illusie's Atiyah class. Here $L^\bullet_{\MM\times_S \XX}$ is the
cotangent complex. We denote by
\begin{eqnarray*}
 A_{\pi_{\MM}}(\EE)\in \Ext^1_{\MM\times_S \XX}(\EE, \EE\Lotimes L^\bullet_{\MM\times_S
 \XX/\MM})\\
  A_{p_{\XX}}(\EE)\in \Ext^1_{\MM\times_S \XX} (\EE, \EE\Lotimes L^\bullet_{\MM\times_S \XX/\XX})
\end{eqnarray*}
the image of $A(\EE)$ via the two maps of cotangent complexes
\begin{eqnarray*}
    L^\bullet_{\MM\times_S \XX}\to L^\bullet_{\MM\times_S\XX/\MM}\\
    L^\bullet_{\MM\times_S \XX}\to L^\bullet_{\MM\times_S\XX/\XX}
\end{eqnarray*}
Since the map $\pi_S$ is flat, we actually have canonical
isomorphisms
$$
L^\bullet_{\MM\times_S\XX/\MM}\simeq Lp_{\XX}^*(L^\bullet_{\XX/S}),
\ \ \ \ \ L^\bullet_{\MM\times_S\XX/\XX}\simeq
L\pi_{\MM}^*(L^\bullet_{\MM/S})
$$
 The class $A_{p_{\XX}}(\EE)$ gives a map in the derived
category
$$
    \RHom(\EE, \EE)[-1]\to L\pi_{\MM}^* L^\bullet_{\MM/S}
$$
which by Verdier duality along the projective morphism $\pi_{\MM}$
gives a map
\begin{eqnarray}\label{relative theory}
    R\pi_{\MM*}(\RHom(\EE, \EE)\Lotimes \omega_{\pi_{\MM}})[n-1]\to
    L^\bullet_{\MM/S}
\end{eqnarray}
where $\omega_{\pi_{\MM}}$ is the relative dualizing sheaf along
$\pi_{\MM}$.
\begin{thm}[\cite{Hurbrechts-Thomas}]
    The map (\ref{relative theory}) is a relative obstruction theory
    for $\MM/S$.
\end{thm}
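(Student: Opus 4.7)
The plan is to verify the Behrend--Fantechi criterion for (relative) obstruction theories via the functorial behavior of (\ref{relative theory}) on infinitesimal extensions. Writing $E^\bullet := R\pi_{\MM*}(\RHom(\EE,\EE)\Lotimes \omega_{\pi_{\MM}})[n-1]$, it suffices to show that for every square-zero extension $T\hookrightarrow \bar T$ of $S$-schemes with ideal sheaf $J$ and every $S$-morphism $g: T\to \MM$, the pullback of (\ref{relative theory}) produces an obstruction class in $\Ext^1_T(Lg^*E^\bullet, J)$ that vanishes if and only if $g$ extends to $\bar g:\bar T\to \MM$, and, when it vanishes, the set of such extensions is a torsor under $\Ext^0_T(Lg^*E^\bullet, J)$.

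The first step is to translate the extension problem via the moduli functor: giving $g$ is equivalent to giving a perfect complex $E:=Lg_{\XX}^*\EE$ on $T\times_S\XX$ with $g_{\XX}:=g\times_S\mathrm{id}_{\XX}$, and extending $g$ to $\bar T$ is equivalent to extending $E$ to a perfect complex on $\bar T\times_S\XX$. By Illusie's obstruction calculus, this latter obstruction is the cup product
$$
\mathrm{ob}(E,\kappa)\;=\;A_{p_{\XX}}(E)\circ \kappa\;\in\;\Ext^2_{T\times_S\XX}\bigl(E,\,E\otimes p_T^*J\bigr),
$$
where $\kappa\in\Ext^1(L^\bullet_{T\times_S\XX/\XX},\,p_T^*J)$ is the Kodaira--Spencer class of the square-zero extension $T\times_S\XX\hookrightarrow \bar T\times_S\XX$ over $\XX$, and $A_{p_{\XX}}(E)$ is the relative Atiyah class.

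The second step is to apply Grothendieck--Verdier duality along the smooth projective morphism $\pi_T:T\times_S\XX\to T$. Using $\omega_{\pi_T}\simeq Lg_{\XX}^*\omega_{\pi_{\MM}}$, the projection formula, and flat base change, duality furnishes a canonical isomorphism
$$
\Ext^2_{T\times_S\XX}\bigl(E,\,E\otimes p_T^*J\bigr)\;\cong\;\Ext^1_T\bigl(Lg^*E^\bullet,\,J\bigr),
$$
under which the relative Atiyah class $A_{p_{\XX}}(\EE)$ pulls back to $A_{p_{\XX}}(E)$ by functoriality of Illusie's construction, while $\kappa$ factors through the composition $Lg^*L^\bullet_{\MM/S}\to J[1]$ classifying the lifting problem on $\MM$. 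Chasing definitions, the Illusie obstruction $\mathrm{ob}(E,\kappa)$ is precisely the class obtained by pairing $Lg^*E^\bullet\to Lg^*L^\bullet_{\MM/S}$ with this Kodaira--Spencer class, as required by the Behrend--Fantechi criterion.

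The main technical obstacle is the compatibility of the cup product $A\cup\kappa$ with Verdier duality and base change along $g$: one must verify the naturality of Illusie's Atiyah class with respect to the Cartesian square relating $\pi_T$ and $\pi_{\MM}$, together with the base change isomorphism for the relative dualizing complex $\omega_{\pi_{\MM}}$. Granted this compatibility, the vanishing criterion for the derived obstruction coincides with Illusie's, and the same Verdier duality argument applied one cohomological degree lower identifies the torsor of extensions with $\Ext^0_T(Lg^*E^\bullet, J)$. Together these yield the required isomorphism on $h^0$ and surjectivity on $h^{-1}$ of the map (\ref{relative theory}), hence the relative obstruction theory.
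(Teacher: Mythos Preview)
The paper does not actually prove this theorem: it is stated with attribution to \cite{Hurbrechts-Thomas} and no proof environment appears, only the clarifying sentence that being a relative obstruction theory means $h^{-1}$ is an epimorphism and $h^0$ is an isomorphism. So there is no paper-side argument to compare against.

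That said, your outline is a faithful sketch of the Huybrechts--Thomas proof itself. Their argument proceeds exactly by verifying the Behrend--Fantechi criterion (Theorem~4.5 of \cite{intrinsic}): translate an $S$-morphism $g:T\to\MM$ and a square-zero thickening $T\hookrightarrow\bar T$ into an extension problem for the pulled-back complex on $T\times_S\XX$, identify the obstruction via Illusie's product of Atiyah and Kodaira--Spencer classes, and then push down along $\pi_T$ using Verdier duality to match this with the class in $\Ext^1_T(Lg^*E^\bullet,J)$ coming from (\ref{relative theory}). The compatibilities you flag---naturality of the Atiyah class under the base-change square and base change for the relative dualizing sheaf---are precisely the points Huybrechts--Thomas check. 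Your sketch is correct in outline; the only caveat is that it remains a sketch, with the ``chasing definitions'' step hiding the real work.
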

This means the map (\ref{relative theory}) has the property that
$h^{-1}$ is epimorphism, $h^0$ is isomorphism \cite{intrinsic}.

\begin{rmk}
Note that only part of the full Atiyah class $A(\EE)$, i.e.
$A_{p_{\XX}}(\EE)$ is used to obtain the relative obstruction
theory. The other part $A_{\pi_{\MM}}(\EE)$ will also be used to
obtain the absolute obstruction theory as we will show in the next
section.
\end{rmk}

\section{Atiyah Class and Obstruction theory of the Pair}
We will keep the same notation in this section as above. Since
$\XX/S$ is a smooth family, the relative cotangent complex
$L^\bullet_{\XX/S}$ is in fact isomorphic to the one-term locally
free sheaf of relative differentials
$$
L^\bullet_{\XX/S}\simeq \Omega_{\XX/S}
$$
The Atiyah class $A_{\pi_{\MM}}(\EE)$ can be written explicitly as
an exact sequence of complexes
\begin{eqnarray}\label{ex seq of Atiyah}
A_{\pi_{\MM}}(\EE):  0\to L^\bullet_{\MM\times_S\XX/\MM}\otimes \EE
\to \EE_{\pi_{\MM}}\to \EE\to 0
\end{eqnarray}
where $\EE_{\pi_{\MM}}$ is isomorphic to $\EE\oplus
\Omega_{\MM\times_S\XX/\MM}\otimes \EE\simeq \EE\oplus
L^\bullet_{\MM\times_S\XX/\MM}\otimes \EE$ as $k$-linear spaces, but
with $\mathcal O_{\MM\times_S \XX}$-module structure given by
\begin{eqnarray}\label{structure-atiyah}
    a\cdot (e_1\oplus e_2\otimes db)=(a\,e_1\oplus a\,e_2\otimes db+e_1\otimes da)
\end{eqnarray}
for $a, b\in \mathcal O_{\MM\times_S \XX}, e_1, e_2\in \EE$. Use the
canonical isomorphism
$$
    \Ext^1_{\MM\times_S\XX}(\EE, \EE\Lotimes L^\bullet_{\MM\times_S\XX})\simeq
    \Ext^1_{\MM\times_S\XX}(\RHom(\EE, \EE), L^\bullet_{\MM\times_S\XX})
$$
we can write $A_{\pi_{\MM}}(\EE)$ as a map
$$
   A_{\pi_{\MM}}(\EE):\ \ \  \RHom(\EE, \EE)[-1] \to L^\bullet_{\MM\times_S\XX/\MM}
$$

 We define the complex $\GG$ to be the mapping cone of the above
map. We get exact triangle
\begin{eqnarray}\label{exact sequence}
\RHom(\EE, \EE)[-1] \to L^\bullet_{\MM\times_S\XX/\MM}\to \GG\to
\RHom(\EE, \EE)
\end{eqnarray}

Note that we have commutative diagram of cotangent complexes
\begin{eqnarray*}
    \xymatrix{
        L^\bullet_{\MM\times_S\XX}\ar[r]\ar[d]&
        L^\bullet_{\MM\times_S\XX/\MM}\ar[d]\\
        L^\bullet_{\MM\times_S\XX/\XX}\ar[r] & L\pi_{\MM}^* Lp_S^*
        L^\bullet_S[1] \simeq Lp_{\XX}^*L\pi_S^* L^\bullet_S[1]
    }
\end{eqnarray*}
Combined with the Atiyah class $\RHom(\EE, \EE)[-1]\to
L^\bullet_{\MM\times_S \XX }$, we get commutative diagram\\
\begin{eqnarray*}
    \xymatrix{
       \RHom(\EE, \EE)\ar[r]\ar[d]&
        L^\bullet_{\MM\times_S\XX/\MM}[1]\ar[d]\\
        L^\bullet_{\MM\times_S\XX/\XX}[1]\ar[r] & L\pi_{\MM}^*Lp_S^*
        L^\bullet_S[2] \simeq Lp_{\XX}^*L\pi_S^* L^\bullet_S[2]
    }
\end{eqnarray*}
Note that we also have exact triangle
\begin{eqnarray*}
    Lp_S^* L^\bullet_{S}\to L^\bullet_{\MM}\to L^\bullet_{\MM/S}\to
    Lp_S^* L^\bullet_S[1]
\end{eqnarray*}
Pull it back to $\MM\times_S \XX$ via $\pi_{\MM}$ we get exact
triangle
\begin{eqnarray*}
  L\pi_{\MM}^*Lp_{S}^* L^\bullet_{S}\to  L\pi_{\MM}^* L^\bullet_{\MM}\to L\pi_{\MM}^*
    L^\bullet_{\MM/S}\simeq L^\bullet_{\MM\times_S \XX/\XX}\to
   L \pi_{\MM}^*Lp_{S}^* L^\bullet_{S}[1]
\end{eqnarray*}
hence the above commutative diagram can be fit into maps of exact
triangles
\begin{eqnarray*}
\xymatrix{ L^\bullet_{\MM\times_S\XX/\MM}\ar[r]\ar[d] &
\GG\ar[r]\ar[d]& \RHom(\EE, \EE)\ar[d]^{A_{p_{\XX}}(\EE)}\ar[r]^{A_{\pi_{\MM}}(\EE)} & L^\bullet_{\MM\times_S\XX/\MM}[1] \ar[d] \\
  L\pi_{\MM}^*Lp_{S}^* L^\bullet_{S}[1]\ar[r]& L \pi_{\MM}^* L^\bullet_{\MM}[1]\ar[r] & L\pi_{\MM}^*
    L^\bullet_{\MM/S}[1]\simeq L^\bullet_{\MM\times_S \XX/\XX}[1]\ar[r] &
    L\pi_{\MM}^*Lp_{S}^* L^\bullet_{S}[2]
 }
\end{eqnarray*}

By Verdier duality along the projective morphism $\pi_{\MM}$, we get
\begin{eqnarray*}
\xymatrix{ R\pi_{\MM*}(L^\bullet_{\MM\times_S\XX/\MM}\Lotimes
\omega_{\pi_{\MM}})[n-1]\ar[r]\ar[d] & R\pi_{\MM*}(\GG\Lotimes
\omega_{\pi_{\MM}})[n-1]\ar[r]\ar[d]& R\pi_{\MM*}(\RHom(\EE,
\EE\Lotimes
\omega_{\pi_{\MM}})[n-1]\ar[d]\\
Lp_{S}^* L^\bullet_{S}\ar[r]& L^\bullet_{\MM}\ar[r] &
    L^\bullet_{\MM/S}
}
\end{eqnarray*}
where $\omega_{\pi_{\MM}}$ is the relative dualizing sheaf along
$\pi_{\MM}$.
\begin{thm}\label{Main Thm}
The map
\begin{eqnarray}
    R\pi_{\MM*}(\GG\Lotimes \omega_{\pi_{\MM}})[n-1]\to L^\bullet_{\MM}
\end{eqnarray}
gives an obstruction theory for $\MM$.
\end{thm}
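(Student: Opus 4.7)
The plan is to deduce the theorem from the Huybrechts--Thomas relative result (Theorem 2.1) by applying a five-lemma argument to the commutative diagram of distinguished triangles displayed just above the theorem. Write
\[
\alpha:R\pi_{\MM*}(L^\bullet_{\MM\times_S\XX/\MM}\Lotimes\omega_{\pi_{\MM}})[n-1]\to Lp_S^*L^\bullet_S,
\]
\[
\beta:R\pi_{\MM*}(\GG\Lotimes\omega_{\pi_{\MM}})[n-1]\to L^\bullet_\MM,
\]
\[
\gamma:R\pi_{\MM*}(\RHom(\EE,\EE)\Lotimes\omega_{\pi_{\MM}})[n-1]\to L^\bullet_{\MM/S}
\]
for the three vertical arrows. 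By Theorem 2.1, $\gamma$ is already a relative obstruction theory, i.e.\ $h^0(\gamma)$ is an isomorphism and $h^{-1}(\gamma)$ an epimorphism; the goal is to transfer these properties across the diagram to $\beta$.

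The essential new input is the analogous statement for the left vertical $\alpha$. Since $\pi_{\MM}$ is the base change of the smooth proper morphism $\pi_S:\XX\to S$ of relative dimension $n$, one has $L^\bullet_{\MM\times_S\XX/\MM}\simeq\Omega_{\pi_{\MM}}$ and the trace isomorphism $R\pi_{\MM*}\omega_{\pi_{\MM}}[n]\simeq\mathcal O_\MM$. Using the projection formula and flat base change along $p_S$, $\alpha$ identifies with $Lp_S^*(\sigma)$, where $\sigma:R\pi_{S*}(\Omega_{\pi_S}\Lotimes\omega_{\pi_S})[n-1]\to L^\bullet_S$ is the canonical map extracted from the Kodaira--Spencer boundary $\Omega_{\XX/S}\to L\pi_S^*L^\bullet_S[1]$ by Verdier duality along $\pi_S$. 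On the fibre over a closed point $s\in S$, Serre duality on $\XX_s$ presents $h^0(\sigma)|_s$ and $h^{-1}(\sigma)|_s$ as the $k$-linear duals of the classical Kodaira--Spencer map $T_sS\to H^1(\XX_s,T_{\XX_s})$ and of the obstruction comparison $h^{-1}(L^\bullet_S)^v|_s\to H^2(\XX_s,T_{\XX_s})$ respectively. The hypothesis that $S$ is universal at every closed point is precisely what forces the first to be an isomorphism and the second to be injective; dualising gives $h^0(\alpha)$ iso and $h^{-1}(\alpha)$ epi at every closed point, hence, by Nakayama, as morphisms of $\mathcal O_\MM$-modules.

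With both $\alpha$ and $\gamma$ shown to be obstruction theories in this sense, a direct chase on the long exact cohomology sequences of the two horizontal triangles, carried out in degrees $-1$ and $0$, yields the same conclusion for $\beta$. Concretely, surjectivity of $h^{-1}(\beta)$ follows from surjectivity of $h^{-1}(\alpha)$ and $h^{-1}(\gamma)$; surjectivity of $h^0(\beta)$ uses surjectivity of $h^0(\alpha)$, $h^0(\gamma)$ together with $h^{-1}(\gamma)$ epi; and injectivity of $h^0(\beta)$ uses injectivity of $h^0(\alpha)$, $h^0(\gamma)$ together with $h^{-1}(\gamma)$ epi. This is the standard five-lemma packaging.

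The principal technical obstacle is the identification of $\alpha$ with the Kodaira--Spencer obstruction map for the smooth family $\XX/S$: one must trace the construction of the leftmost vertical arrow through the commutative diagram of cotangent complexes and verify the compatibility of Verdier duality along $\pi_{\MM}$ with flat base change by $p_S$, ultimately relying on $R\pi_{\MM*}\omega_{\pi_{\MM}}[n]\simeq\mathcal O_\MM$. Once this identification is secured, the universality hypothesis on $S$ supplies exactly the geometric content needed to promote the fibrewise statements into the required global iso/epi properties of $\alpha$, and the five-lemma step is then purely formal.
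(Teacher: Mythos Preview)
Your proposal is correct and follows the same strategy as the paper: identify $\alpha$ with the pullback by $p_S$ of the Kodaira--Spencer map on $S$, show it is an obstruction theory, invoke Huybrechts--Thomas (Theorem~2.1) for $\gamma$, and conclude for $\beta$ by a five-lemma chase on the long exact cohomology sequences of the two horizontal triangles. The only difference is that where you argue directly for $\alpha$ via fibrewise Serre duality, the universality hypothesis, and Nakayama, the paper simply cites Behrend--Fantechi Prop.~6.2 to assert that the Kodaira--Spencer map on $S$ is an obstruction theory and then pulls back along $p_S$.
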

\begin{proof}
    Let's first consider the map
    \begin{eqnarray}\label{derived KS}
        R\pi_{\MM*}(L^\bullet_{\MM\times_S\XX/\MM}\Lotimes \omega_{\pi_{\MM}})[n-1]\to Lp_S^*
        L^\bullet_{S}
    \end{eqnarray}
Let $\omega_{\pi_S}$ be the relative dualizing sheaf of $\XX/S$,
then
$$
    \omega_{\pi_{\MM}}=Lp_{\XX}^* \omega_{\pi_S}
$$
we have
\begin{eqnarray*}
    L^\bullet_{\MM\times_S\XX/\MM}\Lotimes
    \omega_{\pi_{\MM}}=Lp_{\XX}^*(L^\bullet_{\XX/S}\Lotimes \omega_{\pi_S} )
\end{eqnarray*}
since $\pi_S$ is flat, we have base change property
$$
    R\pi_{\MM*}(L^\bullet_{\MM\times_S\XX/\MM}\Lotimes
    \omega_{\pi_{\MM}})[n-1] \simeq  Lp_S^*(R\pi_{S*}( L^\bullet_{\XX/S}\Lotimes \omega_{\pi_S}
    ))[n-1]
$$
and it's easy to see that the map (\ref{derived KS}) is the
pull-back by $p_S$ of the Kodaira-Spencer map \cite{intrinsic}
\begin{eqnarray}\label{KS on S}
R\pi_{S*}( L^\bullet_{\XX/S}\Lotimes \omega_{\pi_S}
    )[n-1]\to L^\bullet_S
\end{eqnarray}
By Prop 6.2 in \cite{intrinsic}, the Kodaira-Spencer map (\ref{KS on
S}) gives an obstruction theory on $S$, i.e., $h^{-1}$ is
epimorphism and $h^0$ is isomorphism. It follows that the map
(\ref{derived KS}) is also epimorphism for $h^{-1}$ and isomorphism
for $h^0$.\\
\indent By Theorem 4.1 in \cite{Hurbrechts-Thomas}, the map
$$
    R\pi_{\MM*}(\RHom(\EE, \EE)\Lotimes \omega_{\pi_{\MM}})[n-1]\to
    L^\bullet_{\MM/S}
$$
gives a relative obstruction theory for $\MM/S$, hence epimorphism
for $h^{-1}$ and isomorphism for $h^0$. Using the long exact
sequence of cohomology associated to the exact triangle, and by
simple diagram chasing, we see that the map
\begin{eqnarray*}
    R\pi_{\MM*}(\GG\Lotimes \omega_{\pi_{\MM}})[n-1]\to L^\bullet_{\MM}
\end{eqnarray*}
is also epimorphism for $h^{-1}$ and isomorphism for $h^0$, hence
giving an obstruction theory for $\MM$.
\end{proof}

\begin{cor}\label{main cor}
    Let $X$ be smooth projective variety, $E\in D^b(X)$ be a perfect
    complex. Let $G^\bullet$ be the mapping cone of the Atiyah class
    $$
            \RHom(E, E)[-1]\to \Omega_X
    $$
    Then the deformation functor $Def_{(X, E)}$ of the pair $(X, E)$ has
    tangent space
    $$
       \emph{ \Ext}^1_X(G^\bullet, \mathcal O_X)
    $$
    and obstruction space can be chosen to be
    $$
       \emph{ \Ext}^2_X(G^\bullet, \mathcal O_X)
    $$
    and for any small extension $0\to k\to A^\prime \to A \to 0$,
    where $A^\prime, A$ are Artin local rings, the restriction map
    $$
        Def_{(X, E)}(A^\prime)\to Def_{(X, E)}(A)
    $$
    is a torsor under
    $$
        \emph{\Ext}^0_X(G^\bullet, \mathcal O_X)
    $$
\end{cor}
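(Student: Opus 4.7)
The plan is to apply Theorem \ref{Main Thm} in a universal situation at $(X,E)$ and then to translate the resulting obstruction theory for $\MM$ into the tangent--obstruction description of $Def_{(X,E)}$ at the point classifying the pair. First I take $S$ to be the base of a (semi-)universal formal deformation of $X=\XX_s$, with universal family $\XX/S$, and let $\MM/S$ be the relative fine moduli of perfect complexes constructed in the previous section. Choose $m\in\MM$ with $i_m^*\EE\simeq E$. Combining semi-universality of $S$ at $s$ with the fine-moduli property of $\MM/S$ identifies the local functor of $\MM$ at $m$ with $Def_{(X,E)}$. Since $\XX/S$ is smooth, $L^\bullet_{\MM\times_S\XX/\MM}\simeq Lp_\XX^*\Omega_{\XX/S}$, so the restriction of $A_{\pi_\MM}(\EE)$ to the fiber $X$ is the absolute Atiyah class $\RHom(E,E)[-1]\to\Omega_X$; hence $\GG|_X\simeq G^\bullet$.

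By Theorem \ref{Main Thm} we have an obstruction theory
$$
\phi\colon E^\bullet\,:=\,R\pi_{\MM*}(\GG\Lotimes\omega_{\pi_\MM})[n-1]\lrarrow L^\bullet_\MM.
$$
I restrict $\phi$ to $m$ via $Li_m^*$. Since $\pi_\MM$ is flat (as a base change of the flat $\pi_S$), flat base change identifies
$$
Li_m^* E^\bullet \,\simeq\, R\Gamma\bigl(X,\,G^\bullet\otimes\omega_X\bigr)[n-1].
$$
Taking $k$-linear duals and applying Serre duality on the smooth projective $n$-fold $X$ (with $G^\bullet$ perfect) one obtains, for every integer $i$,
$$
\Ext^i_k\bigl(Li_m^*E^\bullet,\,k\bigr)\;\simeq\;H^{n-1-i}(X,\,G^\bullet\otimes\omega_X)^\vee\;\simeq\;\Ext^{i+1}_X(G^\bullet,\,\mathcal O_X).
$$

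The standard deformation-theoretic dictionary for an obstruction theory (\cite{intrinsic}) now reads off: $\Ext^0_k(Li_m^*E^\bullet,k)$ is the tangent space at $m$, $\Ext^1_k(Li_m^*E^\bullet,k)$ houses the obstructions, and, for a small extension $0\to k\to A'\to A\to 0$, the fibers of the restriction map $Def_{(X,E)}(A')\to Def_{(X,E)}(A)$ are torsors under $\Ext^{-1}_k(Li_m^*E^\bullet,k)$. Substituting $i=0,1,-1$ in the isomorphism above and transporting along the identification of the local functor at $m$ with $Def_{(X,E)}$ yields the three assertions in turn.

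The step I expect to be most delicate is the first one, namely rigorously checking that the local functor of $\MM$ at $m$ represents $Def_{(X,E)}$: this uses semi-universality of $S$ for $X$ together with the fine-moduli property of $\MM/S$, and, if $(X,E)$ has nontrivial infinitesimal automorphisms, requires handling the automorphism groupoid (for instance by passing to a rigidified moduli or to the formal completion at $m$). Once that moduli setup is in hand, the remaining pieces --- flat base change, Serre duality, and reading off tangent/obstruction from an obstruction theory --- are formal.
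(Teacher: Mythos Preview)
Your approach is essentially the paper's: invoke Theorem \ref{Main Thm}, translate the obstruction theory at the point $m$ into tangent/obstruction/torsor data via Theorem 4.5 of \cite{intrinsic}, and identify the resulting groups using Serre duality. The paper's proof is a one-line citation of exactly these three ingredients; you have simply unpacked them, including the flat base-change step and the identification of the local functor of $\MM$ at $m$ with $Def_{(X,E)}$, which the paper leaves implicit.
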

\begin{proof}
        It follows from Theorem 4.5 in
        \cite{intrinsic}, Theorem \ref{Main Thm} and Serre Duality.
\end{proof}
\section{Application: Deformation theory of Vector Bundle on Smooth Projective Variety}
In this section, we specialize the above discussion to the case of
pair $(X, E)$, where X is projective smooth variety, and E is a
vector bundle on X. Let
$$
    0\to \Omega_X\otimes E\to E_A\to E\to 0
$$
be the Atiyah class. Apply $\Hom(\cdot, E)$, we get
\begin{eqnarray}\label{Hom E of Atiyha}
    0\to \Hom(E, E)\stackrel{i}{\to} \Hom(E_A, E)\stackrel{j}{\to} \Hom(E\otimes \Omega_X, E)\to 0
\end{eqnarray}
We denote the following canonical diagonal map by $k$
$$
    k: \Hom(\Omega_X, \mathcal O_X)\to \Hom(E\otimes \Omega_X, E)
$$
The dual of Atiyah class as an element in $\Ext^1_X(\Hom(\Omega_X,
\mathcal O_X), \Hom(E, E))$ is obtained via the pull-back of the
exact sequence (\ref{Hom E of Atiyha}) by the diagonal map k
\begin{eqnarray*}
    \xymatrix{
    0\ar[r]& \Hom(E, E)\ar[r]^i & \Hom(E_A, E)\ar[r]^j & \Hom(E\otimes \Omega_X,
    E)\ar[r] &
    0\\
    0\ar[r] & \Hom(E, E)\ar[r]\ar[u]^{=}& D(E)\ar[r]\ar[u] & \Hom(\Omega_X, \mathcal
    O_X)\ar[r]\ar[u]^{k}& 0
    }
\end{eqnarray*}
where
\begin{eqnarray}\label{DE}
    D(E)= Ker(\Hom(\Omega_X,\mathcal O_X)\oplus \Hom(E_A,
    E)\stackrel{-k\oplus j}{\longrightarrow} \Hom(\Omega_X\otimes E, E))
\end{eqnarray}
Using the explicit structure of $E_A$ as in
(\ref{structure-atiyah}), it's easy to see that $D(E)$ is the sheaf
of pairs on an open subset $U$
\begin{eqnarray*}
    (t_U, \phi_U), \ \ \ \ t_U\in \Hom_{\mathcal O_U}(\Omega_U, \mathcal
    O_U),\ \phi_U\in \Hom_k(E|_U, E|_U )
\end{eqnarray*}
such that
\begin{eqnarray*}
    \phi_U(a\, e)= a\phi_U(e)+t_U(da)e, \ \ a\in \mathcal O_U, \
    e\in E|_U
\end{eqnarray*}

$D(E)$ is known as the sheaf of differential operators of order
$\leq 1$ with diagonal symbol.

\begin{lem}\label{dual relation}
    Let $G^\bullet$ be the mapping cone of the Atiyah class
    $$
        \Hom(E, E)[-1]\to \Omega_X
    $$
    then we have quasi-isomorphism
    $$
        (G^\bullet)^v\simeq D(E)
    $$
    where $D(E)$ is considered as an one-term complex of locally
    free sheaf.
\end{lem}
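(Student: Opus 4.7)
The plan is to apply the derived dual $\RHom(-,\mathcal{O}_X)$ to the defining exact triangle
$$\Hom(E,E)[-1] \xrightarrow{A(E)} \Omega_X \rarrow G^\bullet \rarrow \Hom(E,E)$$
of $G^\bullet$, and then to compare the resulting triangle with the short exact sequence defining $D(E)$. Since $E$ is locally free, $\Hom(E,E)$ is self-dual and $\Omega_X^v\simeq \Hom(\Omega_X,\mathcal{O}_X)$, so derived duality (which reverses arrows and flips shifts) produces the exact triangle
$$\Hom(E,E)\rarrow (G^\bullet)^v \rarrow \Hom(\Omega_X,\mathcal{O}_X) \xrightarrow{A(E)^v} \Hom(E,E)[1],$$
whose connecting map is, by construction, the dual Atiyah class.

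The short exact sequence $0 \rarrow \Hom(E,E)\rarrow D(E)\rarrow \Hom(\Omega_X,\mathcal{O}_X)\rarrow 0$ appearing in (\ref{DE}) gives an exact triangle with identical outer terms, so the problem reduces to identifying its extension class with $A(E)^v$. This is the main technical step. By construction, $D(E)$ is the pullback of (\ref{Hom E of Atiyha}) along the diagonal map $k$, and (\ref{Hom E of Atiyha}) is, up to the canonical adjunction $\Hom(E\otimes\Omega_X,E)\simeq \Hom(E,E)\otimes\Omega_X^v$-style isomorphism, the Yoneda incarnation of $A(E)$ as an element of $\Ext^1_X(\Hom(E,E),\Omega_X\otimes \Hom(E,E))$. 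Pullback by $k$ is exactly the operation that extracts $A(E)^v \in \Ext^1_X(\Hom(\Omega_X,\mathcal{O}_X),\Hom(E,E))$. One can verify this directly at the level of sections using the explicit module structure (\ref{structure-atiyah}): a local section of $D(E)$ is a pair $(t_U,\phi_U)$ with $\phi_U(ae)=a\phi_U(e)+t_U(da)e$, and the assignment $(t_U,\phi_U)\mapsto t_U$ realizes the extension $D(E)$ as precisely the dual Atiyah extension.

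With both triangles sharing outer vertices and connecting morphism $A(E)^v$, the axiom TR3 furnishes a morphism of triangles, and the $5$-lemma upgrades the induced arrow $(G^\bullet)^v \rarrow D(E)$ to a quasi-isomorphism. The principal obstacle is the identification of the extension class of $D(E)$ with the dual Atiyah class; once this naturality statement is settled, the rest of the argument is formal, and the only care needed is the bookkeeping of shifts when translating between derived-category maps and extension classes.
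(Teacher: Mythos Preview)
Your proof is correct and follows essentially the same approach as the paper: dualize the defining triangle of $G^\bullet$, identify the connecting map with the extension class of $D(E)$ (which the paper establishes in the paragraph preceding the lemma), and compare the two triangles. Your write-up is more explicit about the TR3/5-lemma step and the identification of the dual Atiyah class with the extension defining $D(E)$, but the argument is the same.
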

\begin{proof}
    Let $\alpha(E)$ denote the Atiyah class
    $$
    \alpha(E):  \Hom(E, E)[-1]\to \Omega_X
    $$
    then we have
    $$
         \Hom(E, E)[-1]\stackrel{\alpha(E)}{\to} \Omega_X \to
         G^\bullet\to \Hom(E, E)
    $$
    take the dual, we get
$$
    \Hom(E, E)\to (G^\bullet)^v\to \Hom(\Omega_X, \mathcal
    O_X)\stackrel{(\alpha(E))^v}{\to} \Hom(E, E)[1]
$$
On the other hand, $(\alpha(E))^v: \Hom(\Omega_X, \mathcal O_X)\to
\Hom(E, E)[1]$ is given by the exact sequence
$$
    0\to \Hom(E, E)\to D(E)\to \Hom(\Omega_X, \mathcal O_X)\to 0
$$
which fits into the exact triangle
$$
    \Hom(E, E)\to D(E)\to \Hom(\Omega_X, \mathcal
    O_X)\stackrel{(\alpha(E))^v}{\to} \Hom(E, E)[1]
$$
Comparing the two exact triangles, the lemma follows.
\end{proof}

\begin{lem}
Let $A$ be an Artinian local ring with residue field $k$,
$X_A/\spec\ A$ is a flat deformation of $X/\spec k$
\begin{eqnarray*}
    \xymatrix{
        X\ar@{^(->}[r]\ar[d] & X_A\ar[d]\\
        \spec\ k \ar@{^(->}[r] & \spec\ A
    }
\end{eqnarray*}
let $\tilde E^\bullet$ be a finite complex of locally free sheaves
on $X_A$ whose derived restriction to $X$ is quasi-isomorphic to
$E$, then $\tilde E^\bullet$ is quasi-isomorphic to an one-term
complex of locally free sheaf.
\end{lem}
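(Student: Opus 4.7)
The plan is to proceed by induction on the length of the Artinian local ring $A$. When $A = k$, we have $X_A = X$, and $\tilde E^\bullet$ is itself quasi-isomorphic to $E$, so there is nothing to prove. For the inductive step, choose a small extension $0 \to J \to A \to A_0 \to 0$ with $J \cong k$ and $A_0$ of strictly smaller length, and set $\tilde E_0^\bullet := \tilde E^\bullet \otimes_A A_0$. Since the terms of $\tilde E^\bullet$ are $A$-flat, $\tilde E_0^\bullet$ is a bounded complex of locally free sheaves on $X_{A_0}$ whose derived restriction to $X$ is still $E$. By the inductive hypothesis, $\tilde E_0^\bullet$ is quasi-isomorphic to a single locally free sheaf $E_0$ on $X_{A_0}$.

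Tensoring the small extension with $\tilde E^\bullet$ over $A$ yields a short exact sequence of complexes
\begin{equation*}
    0 \to J \otimes_A \tilde E^\bullet \to \tilde E^\bullet \to \tilde E_0^\bullet \to 0,
\end{equation*}
whose left term is quasi-isomorphic to $E$ since $J \cong k$ and the terms of $\tilde E^\bullet$ are $A$-flat. The associated long exact sequence of cohomology sandwiches $H^i(\tilde E^\bullet)$ between two zeros for every $i \neq 0$, so $\tilde E^\bullet$ is cohomologically concentrated in degree $0$, and $\mathcal F := H^0(\tilde E^\bullet)$ fits into a short exact sequence $0 \to E \to \mathcal F \to E_0 \to 0$.

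It remains to verify that $\mathcal F$ is a locally free $\mathcal O_{X_A}$-module, so that the quasi-isomorphism $\tilde E^\bullet \simeq \mathcal F$ gives the desired one-term complex. For each $i > 0$, using $\tilde E^\bullet \simeq \mathcal F$ in the derived category and the $A$-flatness of the terms of $\tilde E^\bullet$,
\begin{equation*}
    \mbox{Tor}_i^A(\mathcal F, k) \cong H^{-i}(\mathcal F \Lotimes_A k) \cong H^{-i}(\tilde E^\bullet \Lotimes_A k) \cong H^{-i}(\tilde E^\bullet \otimes_A k) \cong H^{-i}(E) = 0,
\end{equation*}
so $\mathcal F$ is $A$-flat. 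Since $\mathcal F \otimes_A k \cong E$ is a locally free $\mathcal O_X$-module and every closed point of $X_A$ lies in $X$, applying the local criterion of flatness stalk by stalk promotes the $A$-flatness of each $\mathcal F_x$ to $\mathcal O_{X_A,x}$-flatness, which then gives $\mathcal O_{X_A,x}$-freeness by finite generation over a Noetherian local ring. The main technical step is this last promotion from $A$-flatness to $\mathcal O_{X_A}$-local freeness; the other steps are formal manipulations with the long exact sequence coming from the small extension.
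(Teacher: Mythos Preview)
Your proof is correct, but it follows a genuinely different route from the paper's.

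The paper argues directly on the complex $\tilde E^\bullet$, trimming it term by term without any induction on $A$. First it trims from the right: if the top nonzero term sits in degree $n>0$, then the restriction $\tilde E^{n-1}|_X \to \tilde E^n|_X$ is surjective (since $H^n(\tilde E^\bullet|_X)=0$), so by Nakayama the original differential is surjective with locally free kernel, and one replaces the tail by that kernel. Once the complex sits in degrees $\leq 0$, it trims from the left: if the first nonzero term is $\tilde E^n$ with $n<0$, the restriction $\tilde E^n|_X \to \tilde E^{n+1}|_X$ is injective, and a short $A$-flatness/Tor computation shows the original differential is injective with locally free cokernel, so one passes to the quotient. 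Iterating leaves a single locally free sheaf. This is essentially the ``perfect complex of tor-amplitude $[0,0]$ is a vector bundle'' argument, run relative to $A$.

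Your approach instead inducts on the length of $A$ via a small extension $0\to k\to A\to A_0\to 0$, uses the long exact sequence to show $\tilde E^\bullet$ has cohomology concentrated in degree $0$, and then promotes $A$-flatness of $\mathcal F=H^0(\tilde E^\bullet)$ to local freeness over $\mathcal O_{X_A}$ by lifting a basis of $\mathcal F\otimes_A k\cong E$. This is more deformation-theoretic in flavor and yields the pleasant byproduct $0\to E\to\mathcal F\to E_0\to 0$, but requires the extra bookkeeping of the induction. The paper's argument is more hands-on with the complex and reaches the conclusion in one pass; your final step (from $A$-flatness plus local freeness on the central fibre to $\mathcal O_{X_A}$-freeness) is exactly the same ingredient the paper uses implicitly when it asserts that the successive cokernels are locally free.
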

\begin{proof}
 Let $\tilde E^n$ be the last non-zero term of $\tilde E^\bullet$.
 If $n>0$, then
 $$
    \tilde E^{n-1}|_{X} \to \tilde E^n|_{X}
 $$
 is surjective by assumption. Hence $\tilde E^{n-1}\to \tilde E^n$ is also surjective by
 Nakayama Lemma, and the kernel of $\tilde E^{n-1}\to \tilde E^n$ is then locally
 free. So we can assume that $\tilde E^0$ is the last non-zero
 term.\\
\indent Now let $\tilde E^{n}$ be the first non-zero term. If $n<0$,
consider the map
$$
    \tilde E^{n}\to \tilde E^{n+1}
$$
Let $K$ be the kernel, $I$ be the image, and $Q$ be the cokernel. By
assumption, the map
$$
   \tilde E^n|_X= \tilde E^{n}\otimes_A k\to \tilde E^{n+1}|_X= \tilde E^{n+1}\otimes_A k
$$
is injective. It implies that the map
$$
    \tilde E^{n}\otimes_A k\to I\otimes_A k
$$
is isomorphism and
$$
    0\to I\otimes_A k\to \tilde E^{n+1}\otimes_A k\to Q\otimes_A
    k\to 0
$$
is exact. Since $\tilde E^{n+1}$ is locally free and $X_A$ is flat
over $A$, we get
$$
    Tor_1^A(Q, k)=0
$$
We see that $Q$ is flat over A,
 hence $I$ is flat over $A$ also. Therefore the sequence
$$
    0\to K\otimes_A k \to \tilde E^n\otimes_A k \to I\otimes_A k\to
    0
$$
is exact. We see that $K\otimes_A k=0$. By Nakayama Lemma,
$$
    K=0
$$
Therefore the map $\tilde E^n\to \tilde E^{n+1}$ is both injective
as a map of sheaves and injective on fibers. So $\tilde
E^{n+1}/\tilde E^n$ is also locally free and $\tilde E^\bullet$ can
be trimmed. We can keep this operation until we get one-term complex
of locally free sheaf, which is quasi-isomorphic to $\tilde
E^\bullet$.
\end{proof}

\begin{cor}\label{deform-equiv}
    Let $X$ be projective smooth variety and $E$ a vector bundle on $X$. Then the local deformation functor of the pair $(X, E)$
    viewing $E$ as derived objects on $X$ is isomorphic to the local deformation
    functor of the pair $(X, E)$ viewing $E$ as vector bundle on $X$.
\end{cor}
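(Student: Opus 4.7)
The plan is to write down the obvious natural transformation from the vector-bundle deformation functor $\mathrm{Def}^{\mathrm{vb}}_{(X,E)}$ to the derived deformation functor $\mathrm{Def}_{(X,E)}$ and verify it is a bijection of $A$-points for every Artin local $k$-algebra $A$. Given a vector-bundle deformation $(X_A, F_A)$, we send it to the derived deformation whose underlying perfect complex is $F_A$ placed in degree zero. This is manifestly functorial in $A$ and compatible with restriction along $X \hookrightarrow X_A$, so the only content is showing the transformation is bijective on equivalence classes.

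For essential surjectivity, let $(X_A, \tilde E)$ be a derived deformation, so $\tilde E$ is a perfect complex on $X_A$ whose derived restriction to $X$ is quasi-isomorphic to $E$. Since $X$ is projective and $A$ is Artin local, $X_A$ is projective over $A$ and carries an ample line bundle, so the perfect complex $\tilde E$ may be represented globally by a bounded complex $\tilde E^\bullet$ of locally free sheaves of finite rank. The preceding lemma then supplies a quasi-isomorphism $\tilde E^\bullet \simeq F_A$ with $F_A$ a single locally free sheaf on $X_A$; by construction $F_A|_X \simeq E$, so $(X_A, F_A)$ is a vector-bundle deformation mapping to $(X_A, \tilde E)$.

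For injectivity, suppose $(X_A, F_A)$ and $(X_A', F_A')$ are vector-bundle deformations that become equivalent as derived deformations. Unwinding the definition, there is an isomorphism of scheme deformations $X_A \simeq X_A'$ restricting to the identity on $X$, and under this identification an isomorphism $F_A \simeq F_A'$ in $D^b(X_A)$. Since both sides are complexes concentrated in degree zero, any derived isomorphism is induced by a genuine isomorphism of sheaves on $H^0$, namely $F_A \simeq F_A'$ as locally free sheaves. Hence the two vector-bundle deformations were already equivalent.

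The one technical point that does real work is the global representability of a perfect complex on the non-reduced Artinian thickening $X_A$ by a bounded complex of locally free sheaves of finite rank; this is what links the abstract perfect complex in the derived functor to the hypothesis of the previous lemma, and uses projectivity of $X_A/A$. Once this is granted, the preceding lemma carries out the hard step of collapsing such a complex down to a single vector bundle, and the rest of the argument is formal.
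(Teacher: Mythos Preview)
Your proof is correct and follows exactly the route the paper intends: the corollary is stated without proof immediately after the preceding lemma, and you have simply spelled out the implicit argument. The key input is that lemma, together with the standard fact (which you correctly isolate) that on the projective scheme $X_A$ every perfect complex is strictly perfect, i.e.\ globally quasi-isomorphic to a bounded complex of locally free sheaves of finite rank; after that the injectivity step is, as you say, formal.
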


\begin{thm}
    The tangent space for the deformation of the pair $(X, E)$ is
    given by
    $$
       H^1(X, D(E))
    $$
    and obstruction space can be chosen to be
    $$
 H^2(X, D(E))
    $$
\end{thm}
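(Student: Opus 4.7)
The plan is to assemble the three preceding results in order, since the theorem is essentially a packaging statement: the abstract obstruction theory of Theorem \ref{Main Thm} is identified, via the dualization lemma, with the classical cohomology of the sheaf of differential operators $D(E)$.

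First, I would invoke Corollary \ref{deform-equiv} to replace the deformation functor of the pair $(X,E)$, with $E$ viewed as a vector bundle, by the deformation functor of $(X,E)$ with $E$ viewed as a perfect complex in $D^b(X)$. This reduction is essential because Corollary \ref{main cor} is stated for pairs $(X, E)$ with $E$ a perfect complex, and it gives the tangent and obstruction spaces as
\begin{equation*}
\text{Ext}^1_X(G^\bullet, \mathcal O_X), \qquad \text{Ext}^2_X(G^\bullet, \mathcal O_X),
\end{equation*}
where $G^\bullet$ is the cone of the Atiyah class $\text{Hom}(E,E)[-1]\to \Omega_X$.

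Second, I would apply Lemma \ref{dual relation}, which supplies the quasi-isomorphism $(G^\bullet)^v \simeq D(E)$, where $D(E)$ sits in degree zero as a locally free sheaf. Combining this with the standard identification
\begin{equation*}
\text{Ext}^i_X(G^\bullet, \mathcal O_X) \;=\; H^i\bigl(X, R\mathcal{H}om(G^\bullet, \mathcal O_X)\bigr) \;=\; H^i\bigl(X, (G^\bullet)^v\bigr),
\end{equation*}
and then substituting $(G^\bullet)^v \simeq D(E)$, yields
\begin{equation*}
\text{Ext}^i_X(G^\bullet, \mathcal O_X) \;\simeq\; H^i(X, D(E))
\end{equation*}
for each $i$. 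Specializing to $i=1$ and $i=2$ gives the claimed tangent and obstruction spaces.

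There is essentially no main obstacle — all the work has been done in the previous lemmas and corollary. The only point worth checking carefully is that the hypercohomology spectral sequence degenerates trivially here because $D(E)$ is concentrated in a single degree as a locally free sheaf, so $H^i(X, D(E))$ denotes ordinary sheaf cohomology without ambiguity. Thus the proof reduces to citing Corollary \ref{deform-equiv}, Corollary \ref{main cor}, and Lemma \ref{dual relation} in sequence.
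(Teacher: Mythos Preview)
Your proposal is correct and follows essentially the same route as the paper's own proof: cite Corollary~\ref{deform-equiv} and Corollary~\ref{main cor} to identify the tangent and obstruction spaces with $\Ext^i_X(G^\bullet,\mathcal O_X)$, then use Lemma~\ref{dual relation} to rewrite these as $H^i(X,D(E))$. The only cosmetic difference is that the paper phrases the dualization as $\Ext^i_X(G^\bullet,\mathcal O_X)=\Ext^i_X(\mathcal O_X,(G^\bullet)^v)$ rather than via hypercohomology, but the content is identical.
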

\begin{proof}Let $G^\bullet$ be the mapping cone of the Atiyah class as above. By corollary \ref{main cor} and corollary \ref{deform-equiv}, the tangent space for the deformation
    of the pair $(X, E)$ is given by
    \begin{eqnarray*}
        \Ext^1_X(G^\bullet, \mathcal O_X)=\Ext^1_X(\mathcal O_X,
        (G^\bullet)^v)
    \end{eqnarray*}
    By lemma \ref{dual relation}
    $$
\Ext^1_X(\mathcal O_X,
        (G^\bullet)^v)=\Ext^1_X(\mathcal O_X,
        D(E))={H}^1(X, D(E))
    $$
    similarly, the obstruction space is given by
    $$
        \Ext^2_X(\mathcal O_X, (G^\bullet)^v)={H}^2(X, D(E))
    $$
\end{proof}

\begin{rmk}
    In the case that $E$ is vector bundle on $X$, this theorem is well-known and can
    also be obtained in the standard way by Cech Cohomology (see for example \cite{deformation} for the line bundle case). Theorem
    \ref{Main Thm} actually generalizes the bundle case above to derived
    objects of coherent sheaves over smooth projective variety.
\end{rmk}

Let $Def_{E}, Def_{(X, E)}, Def_{X}$ be the deformation functor of
$E$, the pair $(X, E)$, and $X$ respectively. Then we have maps
$$
    Def_{E}\to Def_{(X, E)}\to Def_{X}
$$
where the first map is the deformation of $E$ fixing $X$, and the
second map is the forgetful map. Taking the cohomology of the exact
sequence
$$
    0\to \Hom(E, E)\to D(E)\to \Hom(\Omega_X, \mathcal O_X)\to
    0
$$
we get long exact sequence
\begin{eqnarray*}
\xymatrix{
    0\ar[r] & \Ext^0(E, E)\ar[r]& {H}^0(X, D(E)) \ar[r] & H^0(X,
    T_X)\\
    \ar[r] & \Ext^1(E, E)\ar[r] & {H}^1(X, D(E)))\ar[r] & H^1(X,
    T_X)\\
    \ar[r] & \Ext^2(E, E)\ar[r] & {H}^2(X, D(E))\ar[r] & H^2(X, T_X)
    }
\end{eqnarray*}
which can be viewed as
\begin{eqnarray*}
\xymatrix{
  0\ar[r] &  Aut_E\ar[r] & Aut_{(X, E)}\ar[r] & Aut_X\\
    \ar[r] & {Def_E}\ar[r] & {Def_{(X, E)}}\ar[r] & {Def_X}\\
    \ar[r] & Ob_{E}\ar[r] & Ob_{(X, E)}\ar[r] & Ob_{X}
   }
\end{eqnarray*}
Therefore we see that the tangent-obstruction theory for deforming
bundle, deforming pairs of bundle and scheme, and deforming scheme
are naturally combined into long exact sequence coming from exact
triangle in the derived category of coherent sheaves on X via the
construction of Atiyah class. We have the same structure if $E$ is a
perfect complex of coherent sheaves on $X$ by using the exact
triangle (\ref{exact sequence}) instead.


  \appendix


\end{document}